\def\nto{\mathrel{\nrightarrow}}
\newtheorem{theorem}{Theorem}[section]
\newtheorem{lemma}[theorem]{Lemma}
\newtheorem{remark}[theorem]{Remark}
\newtheorem{conjecture}[theorem]{Conjecture}
\newtheorem{problem}[theorem]{Problem}
\begin{document}

\title{A counter-intuitive correlation in a random tournament}

\author{Sven Erick Alm} 
\address{Department of Mathematics, Uppsala University, 
P.O.\ Box 480,  SE-751 06, Uppsala, Sweden.}
\email{sea@math.uu.se}

\author{Svante Linusson} \thanks{Svante Linusson is a Royal Swedish Academy of Sciences Research Fellow supported by a grant from the Knut and Alice Wallenberg Foundation.}
\address{Department of Mathematics, KTH-Royal Institute of Technology, 
  SE-100 44, Stockholm, Sweden.}
\email{linusson@math.kth.se}
\thanks{This research was conducted when both authors visited the Institut Mittag-Leffler (Djursholm, Sweden).}

\date{\today}

\begin{abstract}
Consider a randomly oriented graph $G=(V,E)$ and let $a$, $s$ and $b$ be three distinct vertices in $V$. 
We study the correlation between the events $\{a\to s\}$ and $\{s\to b\}$. 
We show that, when $G$ is the complete graph $K_n$, 
the correlation is negative for $n=3$, zero for $n=4$, and that, counter-intuitively, it is positive for $n\ge 5$. 
We also show that the correlation is always negative when $G$ is a cycle, $C_n$, and negative or zero when $G$ is a tree (or a forest).
\end{abstract}

\maketitle

\section{Introduction} \label{S:Intro}
Given a graph $G=(V,E)$ we orient each edge with equal probability for the two possible directions and independent of all other edges. 
This model has been studied previously in for instance \cite{G00, SL2, CM}. 
Let $a$, $s$ and $b$ be three distinct vertices in $V$.
The object of this paper is to study the correlation of the two events $\{a\to s\}$, that there exists a directed path from $a$ to $s$,
and $\{s\to b\}$. One might intuitively guess that they are always negatively correlated, 
i.e. that $P(a\to s, s\to b)< P(a\to s)\cdot P(s\to b)$. 
This is however not true for all graphs. 
In fact, the smallest counterexample is the graph on four vertices with all edges except $\{a,b\}$ present, see Section \ref{S:CE}.
 
In section \ref{S:Kn} we prove that for the complete graph, $K_n$, the events are negatively correlated for $n=3$, 
independent for $n=4$ and positively correlated for $n\ge 5$. The complementary events, 
$A:=\{a \nto s\}$, that there does not exist a directed path from $a$ to $s$, and $B:=\{s\nto b\}$ have the same covariance, 
and we show that their relative covariance, $(P(A\cap B)-P(A)\cdot P(B))/P(A\cap B)$ converges to $1/3$ as $n\to\infty$.

In Section \ref{S:Rec} we give exact recursions for the probabilities $P(A)$ and $P(A\cap B)$, and compute these for $n\le15$.
For completeness, in Section \ref{S:CT} we show that the events are negatively correlated when $G$ is a cycle 
and negatively correlated or independent when $G$ is a tree (or a forest). We end with stating a number of conjectures and open problems.

In a coming paper, \cite{AL2}, we will study this problem when $G$ is the random graph $G(n,p)$.

\medskip
The question studied here was posed in \cite{SL2}. 
There it was proved that under this model for any vertices $a,b,s,t\in V$ the events $\{s\to a\}$ and $\{s\to b\}$ are never negatively
correlated. This was shown to be true also if we first conditioned on $\{s\nto t\}$, i.e.
\hbox{$P(s\to a, s\to b | s\nto t )\ge P(s\to a | s\nto t)\cdot P(s\to b | s\nto t)$.}
As a sort of converse it was also proved that $P(s\to a, b\to t | s\nto t )\le P(s\to a | s\nto t)\cdot P(b\to t | s\nto t)$. 
The proofs in \cite{SL2} relied heavily on the results in \cite{vdBK} and \cite{vdBHK}, 
where similar statements were proved for edge percolation on a given graph and a result from \cite{CM} that relates the random orientation with edge percolation. 
This cluster of questions on correlation of paths have been inspired by an interesting conjecture due to Kasteleyn, 
named the Bunkbed conjecture by H\"aggstr\"om \cite{OH2}, see also \cite{SL1} and Remark 5 in \cite{vdBK}.

\bigskip\noindent
{\bf Acknowledgment:} We thank Svante Janson, Stanislav Volkov and Johan W\"astlund for fruitful discussions.

\section{A counter-intuitive example} \label{S:CE}

Let $G$ be a graph with four vertices and all edges except the one between $a$ and $b$ present, see Figure \ref{F:pos},
and let $C:=\{a\to s\}$ and $D:=\{s\to b\}$.

\begin{figure}[h]
\includegraphics[height=3cm]{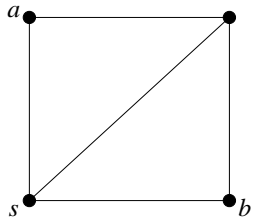}
\caption{A counterexample with positive correlation}
\label{F:pos}
\end{figure}

Then, $P(C)=P(D)=\frac12+\frac18+\frac1{32}=\frac{21}{32}$ and $P(C\cap D)=\frac14+\frac1{16}+\frac1{16}+\frac1{16}=\frac7{16}$,
so that 
\[P(C\cap D)-P(C)\cdot P(D)=\frac7{16}-\left(\frac{21}{32}\right)^2=\frac7{1024}>0.\]
Note that if we relabel the vertices in this graph as in Figure \ref{F:neg}, we still get $P(C)=P(D)=\frac{21}{32}$, but
$P(C\cap D)=\frac14+\frac18+\frac1{32}=\frac{13}{32}$, so that
\[P(C\cap D)-P(C)\cdot P(D)=\frac{13}{32}-\left(\frac{21}{32}\right)^2=-\frac{25}{1024}<0.\]
In fact, the labeling in Figure \ref{F:pos} is the only graph with four vertices that gives a positive correlation.

\begin{figure}[h]
\includegraphics[height=30mm]{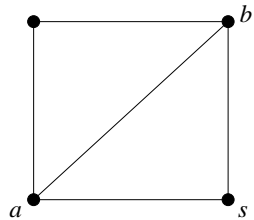}
\caption{A slightly modified example with negative correlation}
\label{F:neg}
\end{figure}

\section{The complete graph $K_n$} \label{S:Kn}


Let $G$ be the complete graph, $K_n$, and orient the edges either way independently with probability $\frac{1}{2}$. 

For three different vertices, $a$, $s$ and $b$, of $K_n$ we want to know if the event $\{a \to s\}$
and the event $\{s\to b\}$ are positively or negatively correlated.
It turns out to be easier to study the correlation of the complementary events, 
i.e. $A:=\{a \nto s\}$, that there does not exist a directed path from $a$ to $s$, and the event $B:=\{s\nto b\}$,
which have the same correlation. 

Think of the vertices of $K_n$ as $[n]:=\{1,\dots,n\}$. 

\bigskip
To estimate $P(A)$, the following lemma will be useful.
\begin{lemma} \label{L:a} For all $n\ge0$,
\[ a(n):=\sum_{k=1}^{n-1} \binom{n}{k}\sum_{m=1}^{n-k} \binom{n-k}{m}\Big(\frac12\Big)^{km}\le5.6\cdot\Big(\frac74\Big)^n.
\]
\end{lemma}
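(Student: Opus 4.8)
The plan is to bound the double sum $a(n)=\sum_{k=1}^{n-1}\binom{n}{k}\sum_{m=1}^{n-k}\binom{n-k}{m}(1/2)^{km}$ by controlling the inner sum over $m$ for each fixed $k$, and then summing the resulting bounds over $k$. The key observation is that the factor $(1/2)^{km}$ decays geometrically in $m$ with ratio $(1/2)^k$, so the inner sum behaves very differently depending on whether $k$ is small or large. First I would rewrite the inner sum as
\[
\sum_{m=1}^{n-k}\binom{n-k}{m}\Big(\tfrac12\Big)^{km}\le\sum_{m=0}^{n-k}\binom{n-k}{m}\Big(\tfrac{1}{2^k}\Big)^{m}=\Big(1+\tfrac{1}{2^k}\Big)^{n-k},
\]
using the binomial theorem (dropping the lower-order requirement $m\ge1$ only loosens the bound). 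This reduces the problem to estimating $\sum_{k=1}^{n-1}\binom{n}{k}\big(1+2^{-k}\big)^{n-k}$.

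The next step is to understand where the dominant contribution sits. For the $k=1$ term we get $\binom{n}{1}(3/2)^{n-1}=n(3/2)^{n-1}$, which is $o((7/4)^n)$ since $3/2<7/4$; similarly each individual small-$k$ term is dominated by a power strictly below $7/4$ but is multiplied by a binomial coefficient. The genuinely delicate region is intermediate $k$, where $\binom{n}{k}$ is large while $\big(1+2^{-k}\big)^{n-k}$ is close to $1$. I would split the range of $k$ into a ``small'' part (say $k$ up to some threshold like $k_0=\log_2 n$ or a fixed constant) and a ``large'' part. For large $k$, the factor $(1+2^{-k})^{n-k}$ is essentially $1$ (more precisely $\le e^{n2^{-k}}$, which is bounded once $2^k\ge n$), so the contribution is controlled by $\sum_k\binom{n}{k}$ restricted to that range; the point is that for $k$ near $n/2$ the binomials are huge but the whole sum $\sum_k\binom{n}{k}=2^n$ exceeds $(7/4)^n$, so naive bounding fails and one must exploit the decay $(1+2^{-k})^{n-k}$ more carefully, balancing $\binom{n}{k}\approx 2^{nH(k/n)}$ against it.

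The heart of the argument, and the main obstacle, is therefore the large-$k$ (equivalently large-$m$) balancing: one wants to show that the product $\binom{n}{k}(1+2^{-k})^{n-k}$, maximized over $k$, grows no faster than $(7/4)^n$ up to the constant $5.6$. I expect the cleanest route is to use the symmetric estimate that treats $k$ and $m$ on equal footing. Since $(1/2)^{km}$ is tiny unless both $k$ and $m$ are small, the effective sum is concentrated on pairs with, say, $k+m$ small; one can then bound $\binom{n}{k}\binom{n-k}{m}\le\binom{n}{k}\binom{n}{m}\le n^{k+m}/(k!\,m!)$ and show $\sum_{k,m\ge1}\frac{n^{k+m}}{k!\,m!}2^{-km}$ is dominated by the diagonal-ish terms, extracting the exponential rate. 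The arithmetic to pin down the base $7/4$ and the explicit constant $5.6$ (presumably verifying small $n$ by hand and the asymptotic regime via the geometric/entropy bound) is the routine but fiddly part; the conceptual difficulty is choosing the split point and the bounding inequality so that the resulting geometric series closes at exactly rate $7/4$ rather than something larger.
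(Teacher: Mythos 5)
There is a genuine gap, and it occurs at the only step you carry out in full. When you bound the inner sum by adjoining the $m=0$ term, you replace $\sum_{m=1}^{n-k}\binom{n-k}{m}(2^{-k})^{m}$, which equals exactly $(1+2^{-k})^{n-k}-1$, by $(1+2^{-k})^{n-k}$. That discarded $-1$ is precisely what makes the lemma true: since $(1+2^{-k})^{n-k}\ge1$ for every $k$, your ``reduced problem'' satisfies
\[
\sum_{k=1}^{n-1}\binom{n}{k}\Big(1+\frac1{2^k}\Big)^{n-k}\;\ge\;\sum_{k=1}^{n-1}\binom{n}{k}\;=\;2^n-2,
\]
which exceeds $5.6\cdot(7/4)^n$ already for $n=15$. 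So you have reduced the lemma to a false statement; no amount of ``balancing $\binom{n}{k}\approx 2^{nH(k/n)}$ against the decay'' can recover it, because after adding the $m=0$ term there is no decay left. The point of the constraint $m\ge1$ is that every surviving term carries at least one factor $(1/2)^k$, and any successful argument must keep that factor attached to $\binom{n}{k}$. The paper's proof does exactly this, and far more simply than your sketch anticipates: for $k\ge2$ and $m\ge1$ it uses $(\frac12)^{km}=(\frac14)^m(\frac12)^{(k-2)m}\le 4\cdot(\frac12)^k(\frac14)^m$, whence two applications of the binomial theorem give the $k\ge2$ part $\le 4\sum_k\binom{n}{k}(\frac12)^k(\frac54)^{n-k}\le 4(\frac12+\frac54)^n=4\cdot(\frac74)^n$, while the $k=1$ term is $n(\frac32)^{n-1}\le 1.6\cdot(\frac74)^n$. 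No threshold at $\log_2 n$, no entropy estimates, and no large-$k$ analysis are needed; the base $7/4$ emerges as $\frac12+\frac54$.

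Your proposed ``heart'' also rests on a false heuristic: the sum is \emph{not} concentrated on pairs with $k+m$ small. The dominant contribution is $k=1$ with $m$ of order $n/3$ (this is why, as the paper's remark notes, $a(n)$ is genuinely of order $p(n)\cdot(\frac32)^n$ for a polynomial $p$). Your Poissonization $\binom{n}{k}\binom{n-k}{m}\le n^{k+m}/(k!\,m!)$ can in fact be pushed through — decoupling $2^{-km}$ via $km\ge k+m-1$ for $k,m\ge1$ yields a bound of order $n\,e^{n/2}$, and $e^{1/2}\approx1.649<7/4$ — so the approach is salvageable \emph{asymptotically}, but it loses a polynomial factor, so the explicit constant $5.6$ would fail for all $n$ up to roughly $50$ and would force a long finite verification. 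As submitted, the proposal consists of one incorrect reduction plus an unexecuted sketch built on a wrong concentration claim, so it does not constitute a proof.
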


\begin{proof}As $a(0)=a(1)=0$, we will assume that $n\ge2$.
We will use that $(\frac12)^{km}=(\frac14)^m\cdot(\frac12)^{(k-2)m}\le(\frac14)^m\cdot(\frac12)^{k-2}=4\cdot(\frac12)^k\cdot(\frac14)^m$,
if $m\ge1$ and $k\ge2$, and split the sum into two parts, $k=1$ and $k\ge 2$.
\begin{align*}
a(n)&=n\cdot\sum_{m=1}^{n-1} \binom{n-1}{m}\Big(\frac12\Big)^m +\sum_{k=2}^{n-1} \binom{n}{k}\sum_{m=1}^{n-k} \binom{n-k}{m}\Big(\frac12\Big)^{km}\\
&\le n\cdot\Big(\frac32\Big)^{n-1}
+4\cdot\sum_{k=2}^{n-1} \binom{n}{k}\Big(\frac12\Big)^k\sum_{m=1}^{n-k} \binom{n-k}{m}\Big(\frac14\Big)^{m}\\
&\le n\cdot\Big(\frac32\Big)^{n-1}+4\cdot\sum_{k=2}^{n-1} \binom{n}{k}\Big(\frac12\Big)^k\Big(\frac54\Big)^{n-k}\\
&\le n\cdot\Big(\frac32\Big)^{n-1}+4\cdot\Big(\frac74\Big)^n.
\end{align*}
The lemma follows by showing that $n\cdot(\frac32)^{n-1}\le1.6\cdot(\frac74)^n$ holds for all $n\ge2$.
\end{proof}

\begin{remark}
As $k=1$ contributes $n\cdot(\frac32)^{n-1}$, this gives a lower bound for $a(n)$. 
This is in fact the dominating term, so that $a(n)$ asymptotically is of order $p(n)\cdot(\frac32)^n$, where $p(n)$ is a polynomial in $n$.
By splitting the sum into three parts, we can, for example, show that $a(n)\le13.6\cdot(\frac{13}8)^n$ for all $n\ge0$.
\end{remark}

\begin{theorem} \label{L:A} For all $n\ge2$,
\[ \Big(\frac12\Big)^{n-2}\left(1-\Big(\frac12\Big)^{n-1}\right)\le P(A)\le\Big(\frac12\Big)^{n-2}\left(1+3.2\cdot\Big(\frac78\Big)^{n-1}\right).
\]
\end{theorem}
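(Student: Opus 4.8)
The two inequalities have a rather different flavour, so I would prove them separately, and I expect only the upper one to need Lemma~\ref{L:a}.

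\emph{Lower bound.} Here I would avoid any series expansion and instead exhibit two explicit sub-events of $A$. Let $E_1$ be the event that $a$ is a sink (all $n-1$ edges at $a$ point towards $a$) and let $E_2$ be the event that $s$ is a source (all $n-1$ edges at $s$ point away from $s$). If $a$ is a sink it has no out-edge, so $a\nto s$; if $s$ is a source nothing points into $s$, so again $a\nto s$. Hence $E_1\cup E_2\subseteq A$. Clearly $P(E_1)=P(E_2)=(\frac12)^{n-1}$, and for $E_1\cap E_2$ the edge $\{a,s\}$ is forced to $s\to a$ (consistent with both events) while the remaining $2(n-2)$ edges at $a$ and at $s$ are forced and distinct, so $P(E_1\cap E_2)=(\frac12)^{2n-3}$. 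Inclusion--exclusion then gives
\[
P(A)\ge P(E_1\cup E_2)=2\Big(\tfrac12\Big)^{n-1}-\Big(\tfrac12\Big)^{2n-3}=\Big(\tfrac12\Big)^{n-2}\Big(1-\Big(\tfrac12\Big)^{n-1}\Big),
\]
which is exactly the claimed lower bound.

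\emph{Upper bound.} On the event $A$, let $R$ be the set of vertices reachable from $a$ (including $a$) and $T$ the set of vertices from which $s$ is reachable (including $s$). Since $a\nto s$, the sets $R$ and $T$ are disjoint, every edge leaving $R$ is directed into $R$, and every edge between $T$ and its complement is directed out of $T$. Writing $|R|=p$, $|T|=q$ and choosing the two sets (which contain $a$, resp.\ $s$) among the remaining $n-2$ vertices yields the exact formula
\[
P(A)=\sum_{\substack{p,q\ge1\\ p+q\le n}}\binom{n-2}{p-1}\binom{n-p-1}{q-1}f(p)f(q)\Big(\tfrac12\Big)^{pq+(p+q)(n-p-q)},
\]
where $f(r)$ is the probability that in a random tournament on $r$ vertices a fixed vertex reaches all the others, and the exponent counts the edges between the three blocks $R$, $T$ and the rest. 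The two extreme terms $(p,q)=(1,n-1)$ and $(n-1,1)$ each equal $f(n-1)(\frac12)^{n-1}\le(\frac12)^{n-1}$ and together supply the main term $(\frac12)^{n-2}$.

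\emph{Controlling the remainder.} For every other pair I would bound $f(p)f(q)\le1$ and reindex by $k=p-1$, $m=q-1$, so that the binomial factor becomes $\binom{n-2}{k}\binom{n-2-k}{m}$, precisely the coefficient appearing in $a(n-2)$, while the exponent exceeds $km$ by $(p+q)(n+1-p-q)-1$. After extracting the common decay factor from these residual terms, the double sum is exactly of the form bounded by Lemma~\ref{L:a}, and the arithmetic I am aiming for is the identity $(\frac12)^{n-1}\cdot5.6\,(\frac74)^{n-2}=3.2\,(\frac78)^{n-1}$, which produces the constant $3.2$ and the base $\frac78$ in the statement. The step I expect to be genuinely delicate is making this extraction strong enough to reach the \emph{multiplicative} form $(\frac12)^{n-2}\big(1+3.2(\frac78)^{n-1}\big)$ rather than the weaker additive one: the surplus exponent $(p+q)(n+1-p-q)-1$ is minimised exactly at the boundary pairs $p=1$ or $q=1$ and at the pairs with $p+q=n$ (empty third block), so those families carry less decay than the generic terms and must be estimated directly and absorbed, with the smallest values of $n$ verified by hand.
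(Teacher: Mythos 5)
Your lower bound is, up to phrasing, exactly the paper's: your events $E_1$ ($a$ a sink) and $E_2$ ($s$ a source) are the paper's $E\cap(O_a=\emptyset)$ and $E\cap(I_s=\emptyset)$, and the inclusion--exclusion computation agrees term by term. Your upper bound, however, takes a genuinely different route. The paper never partitions by the full reachable set $R$ of $a$ and co-reachable set $T$ of $s$; it only uses the one-step necessary condition $A\Rightarrow E\cap(O_a\subset O_s)\cap F$, where $O_a,O_s$ are out-neighbourhoods and $F$ forbids edges from $O_a$ into $I_s$. The payoff of that weaker condition is that every configuration automatically carries the prefactor $(\frac12)^{2n-3}$ (all $2n-3$ edges at $a$ and $s$ are specified), so after peeling off the $k=0$ and $m=0$ terms, which supply the main term, the residue is exactly $(\frac12)^{2n-3}a(n-2)$ and Lemma \ref{L:a} finishes at once via the identity you state, $(\frac12)^{2n-3}\cdot 5.6\,(\frac74)^{n-2}=(\frac12)^{n-2}\cdot 3.2\,(\frac78)^{n-1}$. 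Your decomposition is finer --- it is an exact identity (I checked it: the events indexed by disjoint pairs $(R,T)$ are disjoint, exhaust $A$, and factor as claimed), and it could in principle give sharper asymptotics --- but it shifts the work into the surplus bookkeeping, and there one claim needs correction: the surplus $(p+q)(n+1-p-q)-1$ depends only on $t=p+q$ and is minimised precisely on $t=n$ (value $n-1$), not at the boundary pairs $p=1$ or $q=1$ (at $(p,q)=(1,2)$ it equals $3n-7$). This matters quantitatively: a uniform extraction of $(\frac12)^{n-1}$ falls short of the target by a factor $2^{n-2}$, since the allowance is $(\frac12)^{2n-3}\cdot 5.6\,(\frac74)^{n-2}$, not $(\frac12)^{n-1}\cdot 5.6\,(\frac74)^{n-2}$. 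What saves your plan is that for the generic terms $p,q\ge2$, $p+q\le n-1$ one has $t(n+1-t)-1\ge 2(n-1)-1=2n-3$ for $n\ge5$, exactly the prefactor your identity requires, while the three exceptional families --- $p=1$ and $q=1$ (outside Lemma \ref{L:a}, which needs $k,m\ge1$) and $p+q=n$ (where the surplus really is only $n-1$) --- have binomially weighted sums of order $n\cdot 4^{-n}$, comfortably inside the allowance of order $(\frac{7}{16})^{n}$, with small $n$ checked by hand as you propose. So your route is viable and more informative than the paper's, at the cost of this case analysis that the paper's cruder necessary-condition bound sidesteps entirely.
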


\begin{proof}
A necessary condition for $A$ is that the edge between $a$ and $s$ is directed from $s$ to $a$. 
Let $E$, with $P(E)=1/2$, denote this event.

Let $O_a$ and $O_s$ denote the sets of points in $[n]\setminus\{a,s\}$ that can be reached from $a$ and $s$ respectively in one step. 
Similarly, let $I_a$ and $I_s$ denote the sets of points in $[n]\setminus\{a,s\}$ that can reach $a$ and $s$ respectively in one step.

For the lower bound, note that $E\cap(O_a=\emptyset)\Rightarrow A$ and $E\cap(I_s=\emptyset)\Rightarrow A$, so that 
\begin{align*}
P(A)&\ge P((O_a=\emptyset)\cup(I_s=\emptyset))/2=\left(\Big(\frac12\Big)^{n-2}+\Big(\frac12\Big)^{n-2}-\Big(\frac12\Big)^{2n-4}\right)/2\\
&=\Big(\frac12\Big)^{n-2}\left(1-\Big(\frac12\Big)^{n-1}\right).
\end{align*}
For the upper bound, note that $A\Rightarrow E\cap(O_a\subset O_s)\cap F$, 
where $F$ is the event that the points in $O_a$ have no directed edges to points in $I_s$. 
Note that, if $k=|O_a|$ and $m=|O_s|$, then $k\le m$ is necessary, and there are $k(n-2-m)$ edges in $F$. 

\begin{align*}
P(A)&\le P((O_a\subset O_s)\cap F)/2\\
&=\frac12\cdot\sum_{k=0}^{n-2}\binom{n-2}k\Big(\frac12\Big)^{n-2}\sum_{m=k}^{n-2}\binom{n-2-k}{m-k}\Big(\frac12\Big)^{n-2}\cdot\Big(\frac12\Big)^{k(n-2-m)}\\
&=\Big(\frac12\Big)^{2n-3}\sum_{k=0}^{n-2}\binom{n-2}k\sum_{m=0}^{n-2-k}\binom{n-2-k}m\Big(\frac12\Big)^{k\cdot m}\\
&=\Big(\frac12\Big)^{2n-3}\sum_{m=0}^{n-2}\binom{n-2}m+\Big(\frac12\Big)^{2n-3}\sum_{k=1}^{n-2}\binom{n-2}k\\
&\phantom{aa}+\Big(\frac12\Big)^{2n-3}\sum_{k=1}^{n-2}\binom{n-2}k\sum_{m=1}^{n-2-k}\binom{n-2-k}m\Big(\frac12\Big)^{k\cdot m}\\
&=\Big(\frac12\Big)^{2n-3}\cdot 2^{n-2}+\Big(\frac12\Big)^{2n-3}\cdot(2^{n-2}-1)\\
&\phantom{aa}+\Big(\frac12\Big)^{2n-3}\sum_{k=1}^{n-3}\binom{n-2}k\sum_{m=1}^{n-2-k}\binom{n-2-k}m\Big(\frac12\Big)^{k\cdot m}\\
&\le\Big(\frac12\Big)^{n-2}+\Big(\frac12\Big)^{2n-3}\cdot a(n-2)
\le\Big(\frac12\Big)^{n-2}+5.6\cdot\Big(\frac12\Big)^{2n-3}\Big(\frac74\Big)^{n-2}\\
&\le\Big(\frac12\Big)^{n-2}\left(1+5.6\cdot\frac47\cdot\Big(\frac78\Big)^{n-1}\right)
=\Big(\frac12\Big)^{n-2}\left(1+3.2\cdot\Big(\frac78\Big)^{n-1}\right),
\end{align*}
where the function $a$ was defined in Lemma \ref{L:a}.
\end{proof}

\begin{theorem} \label{T:A}
\[\lim_{n\to\infty} 2^{n-2}\cdot P(A)=1.\]
\end{theorem}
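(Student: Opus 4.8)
The plan is to obtain the limit immediately from the two-sided estimate proved in Theorem \ref{L:A}, via a squeeze argument. Multiplying every term of that inequality by $2^{n-2}$ transforms it into
\[1-\Big(\frac12\Big)^{n-1}\le 2^{n-2}\cdot P(A)\le 1+3.2\cdot\Big(\frac78\Big)^{n-1}.\]
First I would observe that the lower bound tends to $1$, because $(1/2)^{n-1}\to0$, and that the upper bound also tends to $1$, because $(7/8)^{n-1}\to0$. The squeeze theorem then forces $2^{n-2}\cdot P(A)\to1$, which is exactly the claim.

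Since the entire analytic content was already carried out in establishing the bounds of Theorem \ref{L:A} (which in turn rested on the estimate for $a(n)$ in Lemma \ref{L:a}), there is essentially no remaining obstacle here: the theorem is a one-line corollary of the preceding result. The only thing worth double-checking is that the two error terms genuinely vanish at the right scale, and both $(1/2)^{n-1}$ and $(7/8)^{n-1}$ plainly do, with $n=2$ giving a valid starting index for the inequality.

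If one instead wanted a self-contained proof not invoking Theorem \ref{L:A}, the genuinely hard part would be reproving the upper bound directly, namely controlling the correction term $(1/2)^{2n-3}\cdot a(n-2)$ and showing it is of strictly smaller order than the leading term $(1/2)^{n-2}$; by Lemma \ref{L:a} this correction is at most $5.6\cdot(1/2)^{2n-3}(7/4)^{n-2}$, which is $O\big((1/2)^{n-2}(7/8)^{n-1}\big)$ and hence negligible. Given the stated results, however, this detour is unnecessary, and the squeeze argument is the cleanest route.
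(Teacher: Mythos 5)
Your proof is correct and matches the paper's own argument, which simply states that the limit follows immediately from the two-sided bound in Theorem \ref{L:A}; you have merely spelled out the squeeze explicitly, with both error terms $(1/2)^{n-1}$ and $(7/8)^{n-1}$ correctly identified as vanishing. Nothing further is needed.
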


\begin{proof}
Follows immediately from Theorem \ref{L:A}.
\end{proof}

To estimate $P(A\cap B)$, the following lemma, in combination with Lemma \ref{L:a}, is useful.

\begin{lemma}\label{L:b}
For all $n\ge0$,
\[b(n):=\sum_{k=1}^{n-1}\binom nk\sum_{i=1}^{n-1-k}\binom{n-k}i\Big(\frac12\Big)^{ki}\sum_{m=1}^k\binom km\Big(\frac12\Big)^{m(n-k-i)}
\le4\cdot\Big(\frac74\Big)^n.\]
\end{lemma}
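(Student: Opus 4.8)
The plan is to follow the proof of Lemma~\ref{L:a} as closely as possible, the new feature being that here two exponents, $ki$ and $mj$, appear simultaneously. First I would dispose of the trivial range: for $n\le 2$ every sum is empty, so $b(n)=0$, and I would intend to check the first few genuine values of $n$ directly against the claimed bound, keeping the asymptotic argument for $n$ large.

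The engine is the same elementary inequality used in Lemma~\ref{L:a}, namely $(\frac12)^{xy}\le 4\,(\frac12)^x(\frac14)^y$ whenever $x\ge 2$ and $y\ge 1$. The idea is to apply it to \emph{both} exponents: $(\frac12)^{ki}\le 4(\frac12)^k(\frac14)^i$ and $(\frac12)^{mj}\le 4(\frac12)^m(\frac14)^j$. The reason for doing both at once is that $i$ and $j$ are linked by $i+j=n-k$, so the two quarter-powers recombine into the single constant $(\frac14)^i(\frac14)^j=(\frac14)^{\,n-k}$; this is exactly what lets the $i$-sum and the $m$-sum, which are otherwise coupled through $j=n-k-i$, be carried out independently. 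On the generic range $k\ge 2$, $m\ge 2$, $i\ge 1$, $j\ge 1$ the summand is then at most $16\binom nk(\frac12)^k(\frac12)^m(\frac14)^{\,n-k}$, and the inner sums collapse via $\sum_i\binom{n-k}i\le 2^{\,n-k}$ and $\sum_m\binom km(\frac12)^m\le(\frac32)^k$. Using $(\frac14)^{\,n-k}2^{\,n-k}=(\frac12)^{\,n-k}$ and $(\frac12)^k(\frac32)^k=(\frac34)^k$, the outer sum telescopes into a single geometric series $\sum_k\binom nk(\frac34)^k(\frac12)^{\,n-k}=(\frac54)^n$, so the generic part contributes a term of order $(\frac54)^n$.

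The boundary strips where the inequality with exponent $\ge 2$ does not apply, namely $k=1$, $m=1$ and $j=1$, have to be treated by hand, and this is where the real work lies. The strip $m=1$ is not negligible: applying the inequality only to $(\frac12)^{ki}$ and using $(\frac12)^{mj}=(\frac12)^{j}$ directly, its contribution sums to order $n\,(\frac54)^{n-1}$, of the same exponential rate as the generic part; the slices $k=1$ and $j=1$ are handled similarly and are smaller. The main obstacle is thus twofold. First, one must organise these several strips so that each again reduces to a clean geometric sum rather than to something that reintroduces a stray factor $2^k$ (which happens, for instance, if one frees $j$ instead of $i$, or forgets the constraint $j\ge 1$). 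Second, there is genuine constant-chasing: a crude double use of the factor-$4$ inequality leaves a constant such as $16$ in front of $(\frac54)^n$, and since $(\frac54)^n\le(\frac74)^n$ this only delivers the stated $4\,(\frac74)^n$ once $n$ is large enough. To cover all $n$ I would either sharpen the decomposition, e.g.\ splitting into three ranges as in the Remark after Lemma~\ref{L:a} to trade the constant against the base, or simply evaluate $b(n)$ directly for the finitely many small $n$ not covered by the asymptotic estimate.
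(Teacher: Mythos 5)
Your plan is workable and would eventually yield the lemma, but it takes a genuinely different and considerably heavier route than the paper. The paper does not reuse the inequality of Lemma \ref{L:a} at all; instead it exploits the symmetric observation that $ki=(k-1)(i-1)+k+i-1\ge k+i-1$ for \emph{all} $k,i\ge1$, i.e.\ $(\frac12)^{ki}\le 2(\frac12)^k(\frac12)^i$, and applies this to both exponents. Since this inequality is valid on the entire summation range (it needs only $m\ge1$ and $n-k-i\ge1$, both automatic), there are no boundary strips whatsoever: one gets $b(n)\le 4\sum_{k}\binom nk(\frac12)^k\bigl[\sum_i\binom{n-k}i(\frac12)^{n-k}\bigr]\bigl[\sum_m\binom km(\frac12)^m\bigr]\le 4\sum_k\binom nk\bigl(\frac34\bigr)^k\le 4\bigl(\frac74\bigr)^n$ in three lines, with the constant $4$ and base $\frac74$ falling out exactly for all $n\ge3$ (and $b(n)=0$ for $n\le2$). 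Your double use of the Lemma \ref{L:a} inequality buys a sharper exponential rate for the generic part ($16(\frac54)^n$, and order $n(\frac54)^{n-1}$ from the $m=1$ strip, so in fact $b(n)=O(n(\frac54)^n)$, stronger than the lemma asserts asymptotically), but at the price of the factor $16$, a case analysis over strips, and a finite small-$n$ verification that you defer; the paper's inequality is weaker per application (factor $2$ with base $\frac12$ rather than $\frac14$) but uniform in $k,i\ge1$, which is precisely what eliminates the case analysis. Two small corrections to your sketch: there is no separate $j=1$ strip, since the Lemma \ref{L:a} inequality only constrains the exponent playing the role of $k$ (so your exceptional slices are exactly $k=1$ and $m=1$, and $k=1$ forces $m=1$ because $m\le k$); and as written your argument remains a plan rather than a proof of the stated all-$n$ bound --- you would need to make the crossover threshold explicit and actually carry out the direct evaluation of $b(n)$ for the small $n$ below it.
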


\begin{proof}Note first that $b(0)=b(1)=b(2)=0$, so that we may assume that $n\ge3$. 
Note also that for $k,i\ge1$, $ki=(k-1)(i-1)+k+i-1\ge k+i-1$, so that $(\frac12)^{ki}\le2(\frac12)^k(\frac12)^i$. This gives
\begin{align*}
b(n)&\le4\cdot\sum_{k=1}^{n-1}\binom nk\cdot\Big(\frac12\Big)^{k}\sum_{i=1}^{n-1-k}\binom{n-k}i\cdot\Big(\frac12\Big)^{i}\cdot\Big(\frac12\Big)^{n-k-i}
\sum_{m=1}^k\binom km\cdot\Big(\frac12\Big)^{m}\\
&\le4\cdot\sum_{k=1}^{n-1}\binom nk\cdot\Big(\frac12\Big)^{k}\cdot1\cdot\Big(\frac32\Big)^{k}
=4\cdot\sum_{k=1}^{n-1}\binom nk\cdot\Big(\frac34\Big)^{k}\le4\cdot\Big(\frac74\Big)^{n}.
\end{align*}
\end{proof}

\begin{theorem}\label{L:AB}For all $n\ge3$,
\[\Big(\frac12\Big)^{2n-3}\left(3-2\Big(\frac12\Big)^{n-3}\right)\le P(A\cap B)
\le\Big(\frac12\Big)^{2n-3}\left(3+20.8\cdot\Big(\frac78\Big)^{n-3}\right).\]
\end{theorem}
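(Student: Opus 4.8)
The plan is to prove the two inequalities separately. The starting observation for both is that $A\cap B$ forces a transitive triangle on $\{a,s,b\}$. Indeed, $A=\{a\nto s\}$ requires the edge $\{a,s\}$ to be oriented $s\to a$, and $B=\{s\nto b\}$ requires $b\to s$; but then an orientation $a\to b$ would give the path $a\to b\to s$, contradicting $A$, so $\{a,b\}$ must be oriented $b\to a$. Hence $s\to a$, $b\to s$ and $b\to a$ are all necessary, contributing a factor $\left(\frac12\right)^3$. Throughout write $V'=[n]\setminus\{a,s,b\}$, so $|V'|=n-3$.

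For the lower bound I would exhibit three sufficient events, each of probability $\left(\frac12\right)^{2n-3}$, and apply Bonferroni. Let $T_1$ be the event that $a$ is a global sink and $b$ is a global source; let $T_2$ be the event that $a$ is a global sink, every edge between $s$ and $V'$ is oriented into $s$, and $b\to s$; and let $T_3$ be the event that $b$ is a global source, every edge between $s$ and $V'$ is oriented out of $s$, and $s\to a$. A direct check shows each $T_i\Rightarrow A\cap B$ (in $T_2$ the vertex $s$ has $a$ as its only out-neighbour, which is a sink; in $T_3$ the vertex $s$ has $b$ as its only in-neighbour, which is unreachable), and each $T_i$ fixes exactly $2n-3$ edges, so $P(T_i)=\left(\frac12\right)^{2n-3}$. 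The intersections $T_1\cap T_2$ and $T_1\cap T_3$ each fix $3n-6$ edges, while $T_2\cap T_3$ is empty for $n\ge4$ since it would require every $V'$–$s$ edge to point both into and out of $s$. Bonferroni then gives
\[
P(A\cap B)\ge\sum_{i}P(T_i)-\sum_{i<j}P(T_i\cap T_j)=3\Big(\tfrac12\Big)^{2n-3}-2\Big(\tfrac12\Big)^{3n-6}=\Big(\tfrac12\Big)^{2n-3}\left(3-2\Big(\tfrac12\Big)^{n-3}\right),
\]
which is exactly the claimed lower bound.

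For the upper bound I would mirror the proof of Theorem \ref{L:A}. Let $O_a,O_s,O_b\subseteq V'$ be the out-neighbourhoods of $a,s,b$ in $V'$. If $v\in O_a$ then $v\to s$ is forbidden (it would give $a\to v\to s$), so $v\in O_s$; similarly $v\in O_s$ forbids $v\to b$, so $v\in O_b$. Thus $A\cap B$ forces $O_a\subseteq O_s\subseteq O_b$, together with the two ``no backward edge'' events $F_A$ (no edge from $O_a$ into $V'\setminus O_s$) and $F_B$ (no edge from $O_s$ into $V'\setminus O_b$). Bounding $P(A\cap B)$ by $\left(\frac12\right)^{3}$ times the resulting triple sum over the sizes $|O_a|\le|O_s|\le|O_b|$ (with the neighbourhood edges contributing $\left(\frac12\right)^{3(n-3)}$ and $F_A,F_B$ the blocking factors), I would peel off the boundary families of terms, which reproduce precisely the leading $3\left(\frac12\right)^{2n-3}$ coming from $T_1,T_2,T_3$, and bound the interior using Lemma \ref{L:a} in the form $a(n-3)\le5.6\cdot\left(\frac74\right)^{n-3}$ and Lemma \ref{L:b} in the form $b(n-3)\le4\cdot\left(\frac74\right)^{n-3}$; collecting the powers of $\frac12$ turns these into the $20.8\cdot\left(\frac78\right)^{n-3}$ correction.

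The hard part will be the upper-bound bookkeeping. The two blocking events $F_A$ and $F_B$ overlap on the edges between $O_a$ and $V'\setminus O_b$, which must not be double counted when computing $P(F_A\cap F_B)$, and the remaining double sum has to be massaged into exactly the nested shape of $b(\cdot)$ (two blocking factors $\left(\frac12\right)^{ki}$ and $\left(\frac12\right)^{m(\cdot)}$) so that Lemma \ref{L:b} applies, with the degenerate ``empty neighbourhood'' strata handled by Lemma \ref{L:a}. One must also verify that peeling the boundary produces the constant $3$ exactly, matching the lower bound. Finally, the base case $n=3$, where $V'=\emptyset$ and $T_2\cap T_3$ is no longer empty, should be verified directly.
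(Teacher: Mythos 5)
Your proposal is correct and follows essentially the same route as the paper: your $T_1,T_2,T_3$ are exactly the paper's three sufficient events $E\cap(O_a=I_b=\emptyset)$, $E\cap(O_a=O_s=\emptyset)$ and $E\cap(I_s=I_b=\emptyset)$ (phrased as sink/source conditions), combined by the same inclusion--exclusion, and your upper bound is the paper's decomposition via $O_a\subseteq O_s\subseteq O_b$ with the blocking event, whose bookkeeping the paper carries out as a seven-part split of the triple sum handling precisely the overlap you flag (the $F$-edge count $ki+m(n-3-k-i)$ subtracts the doubly counted $O_a$-to-$I_b$ edges), with three boundary strata bounded by $a(n-3)\le 5.6\cdot(7/4)^{n-3}$ and the interior by $b(n-3)\le 4\cdot(7/4)^{n-3}$, yielding $3\cdot 5.6+4=20.8$ as you predicted.
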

\begin{proof}
A necessary condition for $A\cap B$ is that the edge between $a$ and $s$ is directed from $s$ to $a$, 
that the edge between $s$ and $b$ is directed from $b$ to $s$ and that the edge between $a$ and $b$ is directed from $b$ to $a$. 
Let $E$, with $P(E)=1/8$, denote this event.

Let $O_a$, $O_s$ and $O_b$ denote the sets of points in $[n]\setminus\{a,s,b\}$ that can be reached from $a$, $s$ and $b$ respectively 
in one step. 
Similarly, let $I_a$, $I_s$ and $I_b$ denote the sets of points in $[n]\setminus\{a,s,b\}$ that can reach $a$, $s$ and $b$ respectively 
in one step.
For the lower bound, note that $E\cap(O_a=O_s=\emptyset)\Rightarrow A\cap B$, $E\cap(O_a=I_b=\emptyset)\Rightarrow A\cap B$ and 
$E\cap(I_s=I_b=\emptyset)\Rightarrow A\cap B$, 
so that 
\begin{align*}
P(A\cap B)&\ge P((O_a=O_s=\emptyset)\cup(O_a=I_b=\emptyset)\cup(I_s=I_b=\emptyset))/8\\
&=\Big(\frac12\Big)^3\left(3\Big(\frac12\Big)^{2n-6}-2\Big(\frac12\Big)^{3n-9}\right)\\
&=\Big(\frac12\Big)^{2n-3}\left(3-2\Big(\frac12\Big)^{n-3}\right),
\end{align*}
as $(O_a=O_s=\emptyset)\cap(I_s=I_b=\emptyset)=\emptyset$.

For the upper bound, we note that $A\cap B\Rightarrow E\cap(O_a\subset O_s\subset O_b)\cap F$, 
where $F$ denotes the event that the points in $O_a$ have no directed edges to points in $I_s$ and 
the points in $O_s$ have no directed edges to points in $I_b$,
so that $P(A\cap B)\le P((O_a\subset O_s\subset O_b)\cap F)/8$.
Let $k=|O_s|$, $i=|I_b|$ and $m=|O_a|$. Then $0\le k\le n-3$, $0\le i\le n-3-k$ and $0\le m\le k$ 
and the direction of all $3(n-3)+3=3n-6$ edges connected to $a$, $s$ and $b$ are determined.
Further, the event $F$ determines the direction of $ki+m(n-3-k-i)$ edges, so that
\begin{align*}
P(A\cap B)
&\le\Big(\frac12\Big)^{3n-6}\sum_{k=0}^{n-3}\binom{n-3}k\sum_{i=0}^{n-3-k}\binom{n-3-k}i\Big(\frac12\Big)^{ki}
\sum_{m=0}^k\binom km\Big(\frac12\Big)^{m(n-3-k-i)}\\
&=\Big(\frac12\Big)^{3n-6}\cdot(S_1+S_2+S_3+S_4+S_5+S_6+S_7),
\end{align*}
where the triple sum is split into seven parts:\\
1: $k=0\Rightarrow m=0$,
2: $k=n-3\Rightarrow i=0$,
3: $i=m=0,1\le k\le n-4$,\\
4: $i=0,m\ge1,1\le k\le n-4$,
5: $m=0,i\ge1,1\le k\le n-4$,\\
6: $i=n-3-k,m\ge1,1\le k\le n-4$,\\
7: $1\le k\le n-4,1\le i\le n-4-k, 1\le m\le k$.\\
The first three cases correspond to the three cases of the lower bound,
\begin{align*}
S_1&=\sum_{i=0}^{n-3}\binom{n-3}i=2^{n-3},\\
S_2&=\sum_{m=0}^{n-3}\binom{n-3}m=2^{n-3},\\
S_3&=\sum_{k=1}^{n-4}\binom{n-3}k=2^{n-3}-2\le2^{n-3}.
\end{align*}
The next three can be expressed by the function $a$ of Lemma \ref{L:a}, 
\begin{align*}
S_4&=\sum_{k=1}^{n-4}\binom{n-3}k\sum_{m=1}^k\binom km\cdot\Big(\frac12\Big)^{m(n-3-k)}
    =\sum_{j=1}^{n-4}\binom{n-3}j\sum_{m=1}^{n-3-j}\binom{n-3-j}m\cdot\Big(\frac12\Big)^{mj}\\
    &=a(n-3)\le5.6\cdot\Big(\frac74\Big)^{n-3},\\
S_5&=\sum_{k=1}^{n-4}\binom{n-3}k\sum_{i=0}^{n-3-k}\binom{n-3-k}i\cdot\Big(\frac12\Big)^{ki}=a(n-3)\le5.6\cdot\Big(\frac74\Big)^{n-3},\\
S_6&=\sum_{k=1}^{n-4}\binom{n-3}k\cdot\Big(\frac12\Big)^{k(n-3-k)}\sum_{m=1}^k\binom km
    \le\sum_{k=1}^{n-4}\binom{n-3}k\sum_{m=1}^k\binom km\cdot\Big(\frac12\Big)^{m(n-3-k)}\\
    &=\sum_{i=1}^{n-4}\binom{n-3}i\sum_{m=1}^{n-3-i}\binom{n-3-i}m\cdot\Big(\frac12\Big)^{im}
    =a(n-3)\le5.6\cdot\Big(\frac74\Big)^{n-3},
\end{align*}
and the last by the function $b$ of Lemma \ref{L:b},
\begin{align*}
S_7&=\sum_{k=1}^{n-4}\binom{n-3}k\sum_{i=1}^{n-4-k}\binom{n-3-k}i\cdot\Big(\frac12\Big)^{ki}\sum_{m=1}^k\binom km\cdot\Big(\frac12\Big)^{m(n-3-k-i)}\\
   &=b(n-3)\le4\cdot\Big(\frac74\Big)^{n-3}.
\end{align*}
Collecting the estimates gives the lemma.
\end{proof}

\begin{theorem}\label{T:AB}
\[\lim_{n\to\infty} 2^{2n-3}\cdot P(A\cap B)=3.\]
\end{theorem}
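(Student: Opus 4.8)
The plan is to deduce this limit directly from the two-sided bound on $P(A\cap B)$ established in Theorem \ref{L:AB}, exactly as Theorem \ref{T:A} was deduced from Theorem \ref{L:A}. All the analytic work has already been done in the preceding lemmas; the final statement is a one-line squeeze.

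Concretely, I would start from the inequality of Theorem \ref{L:AB}, valid for all $n\ge3$, and multiply every term by the positive factor $2^{2n-3}$. This normalizes the leading $(\tfrac12)^{2n-3}$ out of the middle expression and turns the bound into
\begin{align*}
3-2\Big(\frac12\Big)^{n-3}\le 2^{2n-3}\cdot P(A\cap B)\le 3+20.8\cdot\Big(\frac78\Big)^{n-3}.
\end{align*}
The next step is to observe that both error terms vanish as $n\to\infty$: the lower correction $2\,(\tfrac12)^{n-3}\to0$ since $\tfrac12<1$, and the upper correction $20.8\,(\tfrac78)^{n-3}\to0$ since $\tfrac78<1$. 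Hence the left and right sides both converge to $3$.

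Finally, I would invoke the squeeze theorem to conclude that the sandwiched quantity $2^{2n-3}\cdot P(A\cap B)$ also converges to $3$, which is exactly the claim. There is essentially no obstacle here: the genuine difficulty lies entirely in obtaining the sharp constant $3$ in the leading term of Theorem \ref{L:AB}, which in turn rests on the exponential control of the error sums provided by Lemma \ref{L:a} and Lemma \ref{L:b} (the bounds of order $(\tfrac74)^n$ being what guarantee the corrections decay geometrically relative to the $(\tfrac12)^{2n-3}$ main term). Once those estimates are in hand, the limit is immediate.
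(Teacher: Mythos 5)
Your proposal is correct and matches the paper's argument: the paper also deduces Theorem \ref{T:AB} immediately from the sandwich in Theorem \ref{L:AB}, with the error terms $2(\tfrac12)^{n-3}$ and $20.8(\tfrac78)^{n-3}$ vanishing as $n\to\infty$. Your spelled-out squeeze is just a more explicit version of the paper's one-line proof.
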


\begin{proof}
Follows immediately from Theorem \ref{L:AB}.
\end{proof}

\begin{theorem}\label{T:relkorr}
\[\lim_{n\to\infty} \frac{P(A\cap B)-P(A)\cdot P(B)}{P(A\cap B)}=\frac13.\]
\end{theorem}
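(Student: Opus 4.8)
The plan is to rewrite the relative covariance in the convenient form
\[
\frac{P(A\cap B)-P(A)\cdot P(B)}{P(A\cap B)} = 1 - \frac{P(A)\cdot P(B)}{P(A\cap B)},
\]
so that it suffices to determine the limit of the single ratio $P(A)\cdot P(B)/P(A\cap B)$. Everything needed is already packaged in Theorems \ref{T:A} and \ref{T:AB}; the only additional ingredient is the asymptotics of $P(B)$, which I obtain from that of $P(A)$ by a symmetry argument.

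First I would observe that $P(B)=P(A)$ exactly. Indeed, $B=\{s\nto b\}$ and $A=\{a\nto s\}$ are both of the form ``there is no directed path from one distinguished vertex to another''. Since $K_n$ is vertex-transitive and the random orientation is invariant under any relabeling of the vertices, every ordered pair of distinct vertices yields the same probability for this event. Hence Theorem \ref{T:A} applies to $B$ as well, giving $2^{n-2}\,P(B)\to1$, and therefore $2^{2n-4}\,P(A)\cdot P(B)\to1$.

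Next I would line up the powers of two. Writing
\[
\frac{P(A)\cdot P(B)}{P(A\cap B)} = \frac{2\cdot\bigl(2^{2n-4}\,P(A)\cdot P(B)\bigr)}{2^{2n-3}\,P(A\cap B)},
\]
the numerator $2^{2n-4}\,P(A)\cdot P(B)$ tends to $1$ by the previous step, while the denominator $2^{2n-3}\,P(A\cap B)$ tends to $3$ by Theorem \ref{T:AB}. Thus the ratio converges to $2/3$, and the relative covariance converges to $1-2/3=1/3$, as claimed.

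There is essentially no hard step remaining: the genuine work was carried out in the two-sided estimates of Theorems \ref{L:A} and \ref{L:AB}, which pin down the leading constants $1$ and $3$. The only point requiring a moment's care is the bookkeeping of the exponents — one must verify that $P(A)\cdot P(B)$ carries the factor $2^{4-2n}$ against $P(A\cap B)\sim 3\cdot 2^{3-2n}$, so that exactly one factor of $2$ survives in the numerator and produces the constant $2$ rather than $1$. This is precisely what makes the limit equal to $1/3$ rather than, say, $0$.
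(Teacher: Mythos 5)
Your proof is correct and follows the same route as the paper, which also deduces the limit immediately from Theorems \ref{T:A} and \ref{T:AB} together with the symmetry $P(B)=P(A)$; you merely spell out the power-of-two bookkeeping that the paper leaves implicit. No gaps.
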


\begin{proof}
Follows immediately from Theorems \ref{T:A} and \ref{T:AB} as $P(B)=P(A)$.
\end{proof}

\begin{remark}\label{R:korr}
Note that
\[\frac{P(A\cap B)-P(A)\cdot P(B)}{P(A\cap B)}=1-\frac{P(A)\cdot P(B)}{P(A)\cdot P(B\,|\,A)}=1-\frac{P(B)}{P(B\,|\,A)},\]
so that Theorem \ref{T:relkorr} can be formulated as 
\[\lim_{n\to\infty}\frac{P(B)}{P(B\,|\,A)}=\frac23,
\text{ or equivalently }
\lim_{n\to\infty}\frac{P(B\,|\,A)}{P(B)}=\frac32.\]
\end{remark}

Theorem \ref{T:relkorr} shows that the events $A=\{a\nto s\}$ and $B=\{s\nto b\}$ are positively correlated for sufficiently large $n$. 
{}From this follows that the complementary events $C=\{a\to s\}$ and $D=\{s\to b\}$ also are positively correlated for sufficiently 
large $n$. It is in fact true for all $n\ge5$ as the next theorem shows.

\begin{theorem}\label{T:korr}
The events $A=\{a\nto s\}$ and $B=\{s\nto b\}$ are negatively correlated for $n=3$, independent for $n=4$ and positively correlated for $n\ge5$.
\end{theorem}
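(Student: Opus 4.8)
The plan is to use the symmetry $P(B)=P(A)$ to reduce the statement to determining, for each $n$, the sign of
\[\Delta(n):=P(A\cap B)-P(A)^{2},\]
and then to split the range of $n$ into two regimes: an asymptotic regime $n\ge N$ handled by the explicit estimates of Theorems \ref{L:A} and \ref{L:AB}, and a finite regime $3\le n<N$ handled by exact evaluation. For the finite regime I would compute $P(A)$ and $P(A\cap B)$ directly from the recursions of Section \ref{S:Rec} and simply read off the sign of $\Delta(n)$; this immediately gives $\Delta(3)<0$ and $\Delta(4)=0$. For $n=3$ one can even bypass the recursion and enumerate the $2^{3}$ orientations, obtaining $P(A)=\frac38$ and $P(A\cap B)=\frac18$, so that $\Delta(3)=\frac18-\frac{9}{64}=-\frac1{64}<0$, while $n=4$ yields $\Delta(4)=0$.

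For the asymptotic regime I would feed the lower bound on $P(A\cap B)$ from Theorem \ref{L:AB} together with the upper bound on $P(A)$ from Theorem \ref{L:A} into $\Delta(n)$. Writing everything in units of $\left(\frac12\right)^{2n-4}$ and using $\left(\frac12\right)^{2n-3}=\frac12\left(\frac12\right)^{2n-4}$, this gives
\[\Delta(n)\ge\left(\frac12\right)^{2n-4}\left[\frac32-8\left(\frac12\right)^{n}-\left(1+3.2\left(\frac78\right)^{n-1}\right)^{2}\right].\]
The bracket tends to $\frac32-1=\frac12>0$, reflecting the ratio $P(A\cap B)/P(A)^{2}\to\frac32$ already recorded in Remark \ref{R:korr}; hence positivity of the bracket implies positive correlation of $A$ and $B$. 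Expanding the square, the bracket is positive exactly when
\[6.4\left(\frac78\right)^{n-1}+10.24\left(\frac78\right)^{2n-2}+8\left(\frac12\right)^{n}<\frac12,\]
a single-variable inequality whose left-hand side is manifestly decreasing in $n$. With the stated constants one checks that it first holds at $n=21$ and therefore at all $n\ge 21$, so I would take $N=21$.

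The main obstacle will be bridging the two regimes without leaving a gap. The difficulty is entirely in the upper bound on $P(A)$: its correction term $3.2\left(\frac78\right)^{n-1}$ is rather loose (the true order of $a(n)$ is $p(n)\left(\frac32\right)^{n}$, far below the $\left(\frac74\right)^{n}$-type quantity actually used), and after squaring into $P(A)^{2}$ it only shrinks below the $\frac32$ margin around $n=20$. Consequently I must ensure the exact computations reach far enough, namely evaluate $P(A)$ and $P(A\cap B)$ exactly for all $3\le n\le 20$ — extending the tabulation of Section \ref{S:Rec} beyond the listed $n\le 15$ if necessary — so that the finite range $\{3,\dots,20\}$ and the asymptotic range $\{n\ge 21\}$ together cover every $n$. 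I would verify $\Delta(n)>0$ for each $5\le n\le 20$ in the exact computation, completing the claim. If one prefers fewer exact cases, substituting the sharper estimate $a(n)\le 13.6\left(\frac{13}{8}\right)^{n}$ from the Remark after Lemma \ref{L:a} lowers $N$ and correspondingly shortens the finite verification.
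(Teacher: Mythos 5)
Your proposal is correct and follows the same two-regime strategy as the paper: for large $n$, combine the lower bound of Theorem \ref{L:AB} with the upper bound of Theorem \ref{L:A} (using $P(B)=P(A)$ by symmetry), and settle the remaining small cases exactly via the recursions of Lemmas \ref{L:f} and \ref{L:g}. The noteworthy point is that your scaling is the correct one, while the paper's own display contains a factor-of-$4$ slip: it rewrites the term $\left(\frac12\right)^{2n-3}\left(3-2\left(\frac12\right)^{n-3}\right)$ as $\left(\frac12\right)^{2n-4}\left(6-4\left(\frac12\right)^{n-3}\right)$, although at the $\left(\frac12\right)^{2n-4}$ scale — the scale at which it expands the squared term $\left(1+3.2\left(\frac78\right)^{n-1}\right)^2=1+6.4\left(\frac78\right)^{n-1}+10.24\left(\frac{49}{64}\right)^{n-1}$ — it should read $\frac32-\left(\frac12\right)^{n-3}$. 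This inflated positive part yields the paper's margin $5$ and claimed threshold $n\ge8$; with consistent scaling the margin is $\frac32-1=\frac12$, which is exactly your bracket, and your determination that the inequality $6.4\left(\frac78\right)^{n-1}+10.24\left(\frac{49}{64}\right)^{n-1}+\left(\frac12\right)^{n-3}<\frac12$ first holds at $n=21$ checks out (at $n=20$ the first term alone is about $0.506$). Consequently your insistence on exact verification through $n=20$ — beyond Table \ref{Tab:corr}, which stops at $n=13$, and the stated computations for $n\le15$ — is not an optional safeguard but genuinely necessary for this route; the alternative you mention, feeding the sharper bound $a(n)\le13.6\left(\frac{13}8\right)^n$ into Theorem \ref{L:A} to lower the crossover, is the other legitimate repair. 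Your hand checks at $n=3$ ($\Delta(3)=-\frac1{64}$) and $n=4$ ($\Delta(4)=0$) agree with the table, which already certifies $\Delta(n)>0$ for $5\le n\le13$, so only $14\le n\le20$ requires computation beyond what the paper reports.
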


\begin{proof}
{}From Lemmas \ref{L:A} and \ref{L:AB} we get
\begin{align*}
P(A\cap B)-&P(A)\cdot P(B)=P(A\cap B)-(P(A))^2\\
&\ge\Big(\frac12\Big)^{2n-3}\cdot\left(3-2\Big(\frac12\Big)^{n-3}\right)-\left\{\Big(\frac12\Big)^{n-2}\cdot\left(1+3.2\cdot\Big(\frac78\Big)^{n-1}\right)\right\}^2\\
&\ge\Big(\frac12\Big)^{2n-4}\cdot\left(6-4\cdot\Big(\frac12\Big)^{n-3}-1-6.4\cdot\Big(\frac78\Big)^{n-1}-10.24\cdot\Big(\frac{49}{64}\Big)^{n-1}\right)\\
&=\Big(\frac12\Big)^{2n-4}\cdot\left(5-\Big(\frac12\Big)^{n-5}-6.4\cdot\Big(\frac78\Big)^{n-1}-10.24\cdot\Big(\frac{49}{64}\Big)^{n-1}\right)\\
&=\Big(\frac12\Big)^{2n-4}\cdot(5-c(n)),
\end{align*}
where $c(n)$ is a decreasing function of $n$, with $c(8)<5$, so that the theorem holds for $n\ge8$.
The remaining cases, $3\le n\le7$, are proved using the recursion formulas in Lemmas \ref{L:f} and \ref{L:g} in the next section.
\end{proof}

\section{Exact recursions} \label{S:Rec}

In this section we will derive recursions for $P(A)$ and $P(A\cap B)$.
For $n\ge 2$, $s\in [n]$ and $K\subset [n]\setminus \{s\}$ let $\{K\nto s\}$ denote the event $\{a\nto s$ for every $a\in K\}$. 

With $|K|=k$ define
\[ f(n,k):=P_n(K\nto s),\]
where in particular $f(n,0)=1$. Also set $f(1,0)=1$ for convenience.

For $n\ge 3$ and $s,b\in [n], K\subset [n]\setminus \{s,b\}$, $s\neq b$ and $|K|=k$ define:
\[ g(n,k):=P_n(K\nto s,s\nto b),
\]
where in particular $g(n,0)=f(n,1)$. Also let $g(2,0):=f(2,1)=1/2$.

\medskip

\begin{lemma} \label{L:f} For $n\ge k+1\ge 2$ we have
\[ f(n,k)=\sum_{i=0}^{n-k-1} \binom{n-k-1}{i}\frac{(2^{k}-1)^i}{2^{k(n-k)}} f(n-k,i).
\]
\end{lemma}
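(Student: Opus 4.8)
The plan is to set up the recursion by conditioning on how the vertices of $K$ connect to the remaining vertices, and then to reduce $\{K\nto s\}$ to an event of the \emph{same} type on the smaller complete graph $K_{n-k}$ obtained by deleting $K$. Write $V'=[n]\setminus K$, a set of $n-k$ vertices containing $s$, and let $O\subseteq V'$ denote the set of out-neighbours of $K$ inside $V'$, i.e.\ those $v\in V'$ for which $a\to v$ for at least one $a\in K$. Since $f(n,k)$ depends only on $n$ and $k$ (the labelling is irrelevant by the symmetry of $K_n$), it suffices to compute $P_n(K\nto s)$ for one fixed choice of $K$ and $s$.

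The key structural step, which I expect to be the main obstacle, is the equivalence
\[ \{K\nto s\}\quad\Longleftrightarrow\quad \{s\notin O\}\ \cap\ \{O\nto s \text{ in } K_{n-k}\}, \]
where the second event refers only to directed paths using edges internal to $V'$. The forward implication is easy: if some $a\in K$ had a direct edge to $s$ then $a\to s$, contradicting $K\nto s$, so $s\notin O$; and if some $v\in O$ reached $s$ within $V'$, then choosing $a\in K$ with $a\to v$ would give $a\to v\to\cdots\to s$. The reverse implication is the delicate part. Assuming $s\notin O$ and $O\nto s$ in $K_{n-k}$, suppose for contradiction that some $a\in K$ reaches $s$, and take a shortest such directed path. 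Because $s\notin O$ this path has length at least two; let $v_j$ be the last vertex on it lying in $K$. Then its successor $v_{j+1}$ lies in $V'$, so $v_{j+1}\in O$, and $v_{j+1}\ne s$; moreover the remaining portion $v_{j+1}\to\cdots\to s$ uses only vertices of $V'$, contradicting $O\nto s$ in $K_{n-k}$. I would phrase this ``last exit from $K$'' argument carefully, since it is exactly what makes the internal orientation of $K$ irrelevant and lets the recursion close.

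With the equivalence in hand the probability factorises. The value of $O$ and the event $\{s\notin O\}$ depend only on the $k(n-k)$ edges between $K$ and $V'$, whereas $\{O\nto s \text{ in } K_{n-k}\}$ depends only on the $\binom{n-k}{2}$ edges internal to $V'$; these edge sets are disjoint, so the two events are independent once we condition on $O$. Thus $\{K\nto s\}$ is the disjoint union over $U\subseteq V'\setminus\{s\}$ of $\{O=U\}\cap\{O\nto s\text{ in }K_{n-k}\}$, the restriction $s\notin U$ encoding $s\notin O$. For fixed $U$ I would compute $P(O=U)$ vertex by vertex: each of the $k$ edges joining $s$ to $K$ must point into $K$ (one orientation out of $2^k$), each $v\in U$ must receive at least one edge from $K$ ($2^k-1$ orientations), and each remaining vertex of $V'\setminus\{s\}$ must send all its edges into $K$ (one orientation), giving $P(O=U)=(2^k-1)^{|U|}/2^{k(n-k)}$. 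Conditioned on $\{O=U\}$, the event $\{O\nto s\text{ in }K_{n-k}\}$ has probability $P_{n-k}(U\nto s)=f(n-k,|U|)$ by symmetry of $K_{n-k}$.

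Summing over all admissible $U$ then yields
\[ f(n,k)=\sum_{U\subseteq V'\setminus\{s\}}\frac{(2^k-1)^{|U|}}{2^{k(n-k)}}\,f(n-k,|U|)
=\sum_{i=0}^{n-k-1}\binom{n-k-1}{i}\frac{(2^k-1)^i}{2^{k(n-k)}}\,f(n-k,i), \]
where the last equality groups the $U$'s by their common size $i$, there being $\binom{n-k-1}{i}$ of them since $|V'\setminus\{s\}|=n-k-1$. As a sanity check, the extreme case $k=n-1$ collapses the sum to its $i=0$ term $f(n,n-1)=2^{-(n-1)}f(1,0)=2^{-(n-1)}$, which is correct since $\{K\nto s\}$ then forces all $n-1$ edges at $s$ to point away from $s$. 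This matches the claimed identity and completes the plan.
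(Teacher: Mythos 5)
Your proof is correct and follows essentially the same route as the paper: both condition on the set of one-step out-neighbours of $K$ in $[n]\setminus K$ (your $O=U$, the paper's $L$), compute its probability as $(2^k-1)^{|U|}/2^{k(n-k)}$ from the $K$-to-complement edges, and invoke independence from the internal edges to reduce to $f(n-k,|U|)$. The only difference is presentational: you spell out the ``last exit from $K$'' equivalence that the paper's proof states informally.
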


\begin{proof}
We first deduce the following recursion:
\[ P_n(K\nto s)=\sum_{L\subset [n]\setminus (K\cup \{s\})} \frac{1}{2^{k(n-k-|L|)}}\Big(1-\frac{1}{2^k}\Big)^{|L|} P_{n-k}(L\nto s).
\]

This can be seen as follows. We think of the set $L$ as the vertices that we can reach from vertices in $K$ in one step. 
Clearly, we must have $s\notin L$. Every edge from $[n]\setminus (K\cup L)$ to $K$ must be oriented in that direction, 
which gives the first power of $1/2$. To reach a vertex  $c\in L$ in one step it must not be the case that all edges $\{a,c\}, a\in K$ 
are directed away from $c$, which gives the second factor. 
The edges within $K$ make no difference and we have considered all edges going between $K$ and $[n]\setminus K$. 
We are left with a situation where we must make sure that there is no directed path from any vertex in $L$ to $s$ passing over 
vertices in $[n]\setminus K$, which gives the last term. The recursion in the lemma is easily deduced from this, 
by summing over the size, $i=|L|$, of $L$.
\end{proof}

\begin{lemma} \label{L:g} For $n\ge k+2\ge 3$ we have
\[ g(n,k)=\sum_{i=0}^{n-k-2} \binom{n-k-2}{i}\frac{(2^k-1)^i}{2^{k(n-k)}} g(n-k,i).
\]
\end{lemma}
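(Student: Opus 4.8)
The plan is to mirror the proof of Lemma~\ref{L:f}, conditioning on the set $L$ of vertices that can be reached from $K$ in one step, but now keeping track of the extra protected vertex $b$. First I would record the orientations that the two events force on the edges incident to $K$. Exactly as in Lemma~\ref{L:f}, the event $K\nto s$ forces every edge between $K$ and $s$ to point into $K$ (otherwise $s$ would be reachable from $K$ in one step), which in particular gives $s\to u$ for every $u\in K$ and contributes a factor $(\frac12)^k$.

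The new ingredient concerns the edges between $K$ and $b$. I claim that on the event $\{K\nto s,\,s\nto b\}$ every such edge must also point into $K$: since $s\to u$ for all $u\in K$, an edge $u\to b$ would yield the path $s\to u\to b$, contradicting $s\nto b$. This forces a second factor $(\frac12)^k$ and, crucially, shows that $b\notin L$, so that $L$ ranges over subsets of $[n]\setminus(K\cup\{s,b\})$, a set of size $n-k-2$; this is what produces the binomial coefficient $\binom{n-k-2}{i}$ with $i=|L|$. The remaining bookkeeping is as in Lemma~\ref{L:f}: each of the $n-k-2-i$ vertices outside $L\cup\{s,b\}$ must send all $k$ of its edges into $K$, while each of the $i$ vertices of $L$ must have at least one edge out of $K$, giving $(1-\tfrac{1}{2^k})^i=(2^k-1)^i/2^{ki}$. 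Collecting the powers of $2$ from these four contributions, $k+k+k(n-k-2-i)+ki=k(n-k)$, so the combined factor is $(2^k-1)^i/2^{k(n-k)}$, as required.

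Finally I would reduce the two events to the complete graph $H=K_{n-k}$ on $[n]\setminus K$ and identify the reduced probability with $g(n-k,i)$. Since the only edges leaving $K$ go to $L$, the argument of Lemma~\ref{L:f} shows $K\nto s$ is equivalent to $L\nto s$ in $H$. The subtle point, which I expect to be the main obstacle, is the event $s\nto b$: because $s\to u$ for all $u\in K$ and every vertex of $L$ is reached from $K$, the vertex $s$ in fact reaches all of $L$ in the original graph, so a priori $s\nto b$ is equivalent to $\{s\}\cup L\nto b$ in $H$, which looks stronger than the event defining $g(n-k,i)$. Here I would exploit that $H$ is complete: the edge $\{s,b\}$ is present, so $s\nto b$ forces it to be oriented $b\to s$, whence a path $\ell\to b$ with $\ell\in L$ would give $\ell\to b\to s$, contradicting $L\nto s$. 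Thus $L\nto b$ is automatic once $L\nto s$ and $s\nto b$ hold, and the reduced event is exactly $\{L\nto s,\,s\nto b\}$, of probability $g(n-k,i)$. Summing over $i=|L|$ then yields the stated recursion.
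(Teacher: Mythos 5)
Your proof is correct and follows essentially the same route as the paper: condition on the one-step out-neighbourhood $L$ of $K$ exactly as in Lemma \ref{L:f}, now with the extra forced orientations of the edges between $K$ and $b$, and sum over $i=|L|$. Your careful resolution of the subtle point --- that $L\nto b$ comes for free from $L\nto s$ and $s\nto b$ via the forced edge $b\to s$ in the complete reduced graph --- is exactly the paper's opening observation that $\{K\nto s\}$ and $\{s\nto b\}$ imply $\{K\nto b\}$, spelled out in more detail.
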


\begin{proof}
First note that $\{K\nto s\}$ and $\{s\nto b\}$ implies $\{K\nto b\}$.
Reasoning as in the previous proof, we obtain
\[ P_n(K\nto s,s\nto b)=\sum_{L\subset [n]\setminus (K\cup \{s\}\cup \{b\})} \frac{1}{2^{k(n-k-|L|)}}\Big(1-\frac{1}{2^k}\Big)^{|L|} 
P_n(L\nto s, s\nto b).\]
\end{proof}

Using the lemmas, we can recursively compute the desired probabilities $P(A)=f(n,1)$ and $P(A\cap B)=g(n,1)$.
Exact and numerical values of $P(A)$ and $P(A\cap B)$ were computed and are given in Table \ref{Tab:corr} 
together with numerical values of the 
relative covariance 
$(P(A\cap B)-P(A)\cdot P(B))/P(A\cap B)$ for $3\le n\le13$.

\medskip
\begin{table}[hb]
\begin{tabular}{r|r|r|r|r|r}
$n$&{\small $P(A)\cdot 2^{\binom{n}{2}}$}\hfil\hfil &{\small $P(A)$}\hfil\hfil &{\small $P(A\cap B)\cdot 2^{{\binom{n}{2}}}$}\hfil\hfil  &{\small $P(A\cap B)$}\hfil\hfil  &$\frac{P(A\cap B)-P(A)P(B)}{P(A\cap B)}$\\
&&&&&\\[-10pt]
\hline
&&&&&\\[-10pt]
{\small  2}&{\small 1}&{\small 0.500\,0}&{\small }&&\\
{\small  3}&{\small 3}&{\small 0.375\,0}&{\small 1}&{\small 0.125\,000\,0}&{\small $-$0.125\,000\phantom{000I}}\\
{\small  4}&{\small 16}&{\small 0.250\,0}&{\small 4}&{\small 0.062\,500\,0}&{\small 0.000\,000\phantom{000I}}\\
{\small  5}&{\small 150}&{\small 0.146\,5}&{\small 26}&{\small 0.025\,390\,6}&{\small 0.154\,898\phantom{000I}}\\
{\small  6}&{\small 2\,504}&{\small 0.076\,4}&{\small 272}&{\small 0.008\,300\,8}&{\small 0.296\,523\phantom{000I}}\\
{\small  7}&{\small 77\,472}&{\small 0.036\,9}&{\small 4\,672}&{\small 0.002\,227\,8}&{\small 0.387\,428\phantom{000I}}\\
{\small  8}&{\small 4\,677\,904}&{\small 0.017\,4}&{\small 139\,696}&{\small 0.000\,520\,4}&{\small 0.416\,449\phantom{000I}}\\
{\small  9}&{\small 571\,023\,120}&{\small 0.008\,3}&{\small 7\,928\,624}&{\small 0.000\,115\,4}&{\small 0.401\,547\phantom{000I}}\\
{\small  10}&{\small 142\,058\,571\,776}&{\small 0.004\,0}&{\small 917\,140\,928}&{\small 0.000\,026\,1}&{\small 0.374\,613\phantom{000I}}\\
{\small 11}&{\small 71\,626\,948\,215\,168}&{\small 0.002\,0}&{\small 220\,836\,999\,808}&{\small 0.000\,006\,1}&{\small 0.355\,191\phantom{000I}}\\
{\small 12}&{\small 72\,752\,562\,631\,695\,616}&{\small 0.001\,0}&{\small 109\,473\,061\,398\,784}&{\small 0.000\,001\,5}&{\small 0.344\,746\phantom{000I}}\\
{\small 13}&{\small 148\,346\,259\,329\,909\,191\,680}&{\small 0.000\,5}&{\small 110\,228\,037\,783\,934\,976}&{\small 0.000\,000\,4}&{\small 0.339\,426\phantom{000I}}\\
\end{tabular}
\medskip
\caption{Probabilities and relative covariances}\label{Tab:corr}
\end{table}

\bigskip
\section{Cycles and trees} \label{S:CT}
Let $G=C_n$, the cycle with $n$ vertices, and let $c$, $d$ and $n-c-d$ denote the distances (number of edges) 
between $a$ and $s$, $s$ and $b$, and $b$ and $a$, respectively. 
We assume the three vertices to be distinct, so that $c,d,n-c-d\ge1$. 
Further, let $C:=\{a\to s\}$ and $D:=\{s\to b\}$.
Then,
\begin{align*}
P(C)&=\left(\frac12\right)^c+\left(\frac12\right)^{n-c}-\left(\frac12\right)^n,\\
P(D)&=\left(\frac12\right)^d+\left(\frac12\right)^{n-d}-\left(\frac12\right)^n,\\
P(C\cap D)&=\left(\frac12\right)^c\cdot\left(\frac12\right)^{d}+\left(\frac12\right)^n,
\end{align*}
where the last term corresponds to the case when there is a directed path $s\to a\to b\to s$. This gives
\begin{align*}
P(C\cap &D)-P(C)\cdot P(D)\\
&=\left(\frac12\right)^{c+d}+\left(\frac12\right)^n-\left(\Big(\frac12\Big)^c+\Big(\frac12\Big)^{n-c}-\Big(\frac12\Big)^n\right)
\cdot\left(\Big(\frac12\Big)^d+\Big(\frac12\Big)^{n-d}-\Big(\frac12\Big)^n\right)\\
&=\left(\frac12\right)^{2n}\cdot\left(2^n-2^{n-c+d}-2^{n+c-d}-2^{c+d}+2^{n-c}+2^{n-d}+2^c+2^d-1\right)\\
&=\left(\frac12\right)^{2n}\cdot\left(2^{n-c+d}\big(2\cdot2^{c-d}-1-2^{2(c-d)}\big)\right.\\
&\phantom{\left(\frac12\right)^{2n}\cdot\qquad}\left.-2^n\big(1-2^{-c}-2^{-d}\big)-\big(2^{c+d}-2^c-2^d+1\big)\right)\\
&\le-\left(\frac12\right)^{2n}\cdot\left(2^{n-c+d}\cdot\big(2^{c-d}-1)^2+\big(2^c-1\big)\cdot\big(2^d-1\big)\right)\\
&\le-\left(\frac12\right)^{2n},
\end{align*}
with equality if and only if $c=d=1$.

We have proved the following theorem.
\begin{theorem}\label{T:cycles}
When $G$ is the cyclic graph with $n$ nodes, $C_n$, the covariance between the events $\{a\to s\}$ and $\{s\to b\}$ is at most 
$-\left(\frac12\right)^{2n}$, with equality if and only if the vertices $a$ and $b$ are adjacent to $s$.
\end{theorem}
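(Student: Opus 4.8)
The plan is to exploit the very rigid structure of the cycle: between any two of the vertices $a,s,b$ there are exactly two arcs, and a \emph{directed} path must run entirely along one arc with all its edges oriented consistently. I label the three arcs by their lengths $c$, $d$ and $n-c-d$, so that travelling around $C_n$ in one direction one meets $a$, then $s$ (after $c$ edges), then $b$ (after a further $d$ edges), and finally returns to $a$. Since the two arcs joining $a$ and $s$ are edge-disjoint and exhaust all $n$ edges, the event $C=\{a\to s\}$ is the union of the event that the $c$-edge arc is oriented from $a$ to $s$ (probability $(\frac12)^c$) and the event that the complementary $(n-c)$-edge arc is so oriented (probability $(\frac12)^{n-c}$); these coincide only when both arcs point from $a$ to $s$, fixing all $n$ edges and hence of probability $(\frac12)^n$. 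Inclusion--exclusion gives $P(C)=(\frac12)^c+(\frac12)^{n-c}-(\frac12)^n$, and symmetrically $P(D)$.

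The heart of the matter is $P(C\cap D)$, and here I would argue by cases on which arc realises each required path. Writing $C$ and $D$ each as a union of a ``short arc'' event and a ``long arc'' event and intersecting yields four combinations. Two of them force some single arc to be oriented in both directions at once, which is impossible because every arc has at least one edge ($c,d\ge1$); these contribute nothing. The two survivors are: the short arcs of $C$ and $D$ oriented toward $s$ and $b$ (disjoint edges, the third arc free, probability $(\frac12)^{c+d}$), and the configuration in which all $n$ edges form the directed cycle $s\to a\to b\to s$ (probability $(\frac12)^n$). These two are themselves disjoint, so $P(C\cap D)=(\frac12)^{c+d}+(\frac12)^n$. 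The step needing the most care is precisely this bookkeeping: one must check that no path can ``reuse'' part of an arc in a mixed orientation, which follows cleanly from the fact that a path in a cycle is forced onto one arc.

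With the three probabilities in hand the remainder is algebra: substitute into $P(C\cap D)-P(C)P(D)$, clear the common factor $(\frac12)^{2n}$, and regroup the resulting polynomial in powers of $2$, recognising the perfect square $2\cdot2^{c-d}-1-2^{2(c-d)}=-(2^{c-d}-1)^2$ and the product $2^{c+d}-2^c-2^d+1=(2^c-1)(2^d-1)$. This leaves
\[
P(C\cap D)-P(C)\,P(D)=-\Big(\frac12\Big)^{2n}\Big(2^{n-c+d}\big(2^{c-d}-1\big)^2+2^n\big(1-2^{-c}-2^{-d}\big)+\big(2^c-1\big)\big(2^d-1\big)\Big).
\]
Since $c,d\ge1$ we have $1-2^{-c}-2^{-d}\ge0$, so the middle summand is nonnegative and may be dropped for an upper bound, while $2^{n-c+d}(2^{c-d}-1)^2\ge0$ and $(2^c-1)(2^d-1)\ge1$; this forces the covariance to be at most $-(\frac12)^{2n}$. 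Equality requires $c=d$ (from the square), $(2^c-1)(2^d-1)=1$ (from the last factor), and $1-2^{-c}-2^{-d}=0$ (from the dropped term), all holding simultaneously exactly when $c=d=1$, i.e.\ when $a$ and $b$ are both adjacent to $s$. Thus the only genuine obstacle is the clean enumeration behind $P(C\cap D)$; once that is secured, both the inequality and the equality case are forced by the nonnegativity of the three displayed summands.
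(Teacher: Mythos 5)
Your proposal is correct and follows essentially the same route as the paper: the same inclusion--exclusion formulas $P(C)=(\frac12)^c+(\frac12)^{n-c}-(\frac12)^n$ and $P(C\cap D)=(\frac12)^{c+d}+(\frac12)^n$ (with the $(\frac12)^n$ term for the directed cycle $s\to a\to b\to s$), followed by the identical regrouping into $-(\frac12)^{2n}\bigl(2^{n-c+d}(2^{c-d}-1)^2+2^n(1-2^{-c}-2^{-d})+(2^c-1)(2^d-1)\bigr)$ and the same equality analysis forcing $c=d=1$. The only difference is that you spell out the four-case disjointness bookkeeping behind $P(C\cap D)$, which the paper states tersely.
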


\bigskip
For trees the situation is even simpler, as there are no cycles so that there is a unique path between any two vertices.
Two cases can occur. 
If the path between $a$ and $b$ passes $s$, the paths between $a$ and $s$ and between $s$ and $b$ have no edges in common, 
so that the events $\{a\to s\}$ and $\{s\to b\}$ are independent.
On the other hand, if the path between $a$ and $b$ does not pass $s$, then the events are disjoint, 
so that $P(\{a\to s\}\cap\{s\to b\})=0$, and the covariance is strictly negative.
For a forest, if not all three vertices are in the same tree, then trivially $P(\{a\to s\}\cap\{s\to b\})=0$ 
and at least one of $P(a\to s)$ and $P(s\to b)$ is zero, so that the events are independent.

\begin{theorem}
When $G$ is a tree (or a forest), the events $\{a\to s\}$ and $\{s\to b\}$ are either independent or mutually exclusive.
\end{theorem}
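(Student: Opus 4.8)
The plan is to exploit the defining property of a tree: between any two vertices there is a \emph{unique} path in the underlying undirected graph. Consequently the event $\{a\to s\}$ occurs if and only if every edge on the unique $a$--$s$ path is oriented toward $s$, and $\{s\to b\}$ occurs if and only if every edge on the unique $s$--$b$ path is oriented toward $b$. Since distinct edges are oriented independently, whether the two events are correlated is governed entirely by how these two paths overlap. First I would therefore analyze the intersection of the $a$--$s$ path with the $s$--$b$ path.

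Both paths emanate from $s$, so they share a (possibly empty) common initial segment; let $m$ denote the vertex at which they first diverge, and observe that $s$ lies on the $a$--$b$ path exactly when $m=s$. The key step is to split into these two cases. If $m=s$, the two paths are edge-disjoint, so the two events depend on disjoint sets of independently oriented edges and are therefore independent. If $m\neq s$, the segment from $s$ to $m$ is common to both paths; but $\{a\to s\}$ forces every edge of this segment to point toward $s$, whereas $\{s\to b\}$ forces every such edge to point away from $s$. These requirements conflict on each shared edge, so the two events cannot occur together, i.e. they are mutually exclusive and $P(\{a\to s\}\cap\{s\to b\})=0$.

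For a forest I would reduce to the tree case: if $a$, $s$ and $b$ all lie in one component, the argument above applies verbatim. Otherwise at least one of the pairs $a,s$ or $s,b$ lies in different components, so the corresponding event has probability $0$; then both the intersection probability and the product of the marginals vanish, and the events are (trivially) independent.

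I do not expect a genuine obstacle here, since the argument is elementary. The only point requiring care is the bookkeeping in the overlap analysis — pinning down the shared segment and checking that its orientation is forced in opposite directions by the two events — together with correctly handling the degenerate forest configurations, so that the two conclusions \emph{independent} and \emph{mutually exclusive} between them cover every case.
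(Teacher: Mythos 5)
Your proof is correct and follows essentially the same route as the paper: a case split on whether $s$ lies on the unique $a$--$b$ path, giving edge-disjoint paths (hence independence) in one case and conflicting forced orientations on the shared initial segment (hence mutual exclusivity) in the other, with the same trivial-independence handling of disconnected forest configurations. Your identification of the divergence vertex $m$ and the explicit orientation conflict is just a slightly more detailed write-up of the paper's argument, not a different method.
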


\bigskip
\section{Open problems and conjectures} \label{S:PC}

{}From Theorem \ref{T:cycles} and the observations in Section \ref{S:CE}, we make the following conjecture.
\begin{conjecture}
For any connected graph $G=(V,E)$ and three distinct vertices $a$, $s$ and $b$ in $V$; if $s$ has degree at most two, then the events
$\{a\to s\}$ and $\{s\to b\}$ are independent or  negatively correlated.
\end{conjecture}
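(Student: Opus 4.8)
The plan is to condition on the orientations of the (at most two) edges incident to $s$ and reduce the statement to a correlation inequality living on $G-s$. First dispose of the small degrees. If $\deg(s)=0$ then, since $G$ is connected with at least three vertices, no such $s$ exists (and in any case both events would have probability $0$). If $\deg(s)=1$, let $v$ be the unique neighbour of $s$: every directed path into $s$ must end with the edge $v\to s$, while every directed path out of $s$ must begin with $s\to v$, and these require opposite orientations of the single edge $\{s,v\}$. Hence $\{a\to s\}$ and $\{s\to b\}$ are mutually exclusive, so $P(\{a\to s\}\cap\{s\to b\})=0$ and the events are negatively correlated (or independent, if one of them has probability $0$).

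The substance is $\deg(s)=2$; write $u,w$ for the two neighbours. I would condition on the orientations of $\{s,u\}$ and $\{s,w\}$, which are independent of the orientation of $G-s$. Working in $G-s$, set $X_u=\mathbf 1\{a\to u\}$, $X_w=\mathbf 1\{a\to w\}$, $Y_u=\mathbf 1\{u\to b\}$, $Y_w=\mathbf 1\{w\to b\}$ (with the convention $a\to a$). Since the only routes into and out of $s$ pass through $u$ or $w$, a short case analysis over the four orientations of the two edges at $s$ gives, conditionally on the orientation of $G-s$,
\[
P(a\to s\mid G-s)=\tfrac14\bigl(2X_u+2X_w-X_uX_w\bigr)=:f,\qquad
P(s\to b\mid G-s)=\tfrac14\bigl(2Y_u+2Y_w-Y_uY_w\bigr)=:g,
\]
\[
P(a\to s,\,s\to b\mid G-s)=\tfrac14\bigl(X_uY_w+X_wY_u\bigr)=:h,
\]
the point for $h$ being that $\{a\to s,\,s\to b\}$ forces exactly one incoming and one outgoing edge at $s$. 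Taking expectations over $G-s$ and using the identity $\mathrm{Cov}(\{a\to s\},\{s\to b\})=E[h]-E[f]E[g]=E[h-fg]+\mathrm{Cov}(f,g)$ splits the target into two pieces.

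For the first piece I claim $h-fg\le0$ pointwise. Expanding and regrouping the (binary-valued) expression yields
\[
16\,(h-fg)=-2X_uY_u(2-X_w-Y_w)-2X_wY_w(2-X_u-Y_u)-X_uX_wY_uY_w,
\]
which is manifestly $\le0$, so $E[h-fg]\le0$ with no hypotheses needed. It remains to show $\mathrm{Cov}(f,g)\le0$. Writing $f=\tfrac14\bigl(2\,\mathbf 1\{a\to u\text{ or }a\to w\}+\mathbf 1\{a\to u\text{ and }a\to w\}\bigr)$ and symmetrically for $g$, bilinearity of covariance (all four coefficients positive) reduces $\mathrm{Cov}(f,g)\le0$ to four inequalities of the form $\mathrm{Cov}(R,S)\le0$, where each $R$ is an increasing ``reachable from $a$'' event and each $S$ is an increasing ``reaches $b$'' event.

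Thus everything reduces to the single principle that, in a random orientation, any source-reachability event and any sink-reachability event are negatively correlated. This is the main obstacle, and it is exactly the delicate point. The analogous statement in \cite{SL2} is established only after conditioning on a non-reachability event, namely $P(s\to a,\,b\to t\mid s\nto t)\le P(s\to a\mid s\nto t)\,P(b\to t\mid s\nto t)$, and its proof routes through the random-orientation/edge-percolation correspondence of \cite{CM} together with the van den Berg--Kahn type inequalities of \cite{vdBK,vdBHK}. I would attempt to run the same machinery here unconditionally, realising a witnessing path from $a$ and a witnessing path to $b$ as a disjoint occurrence in the percolation picture and invoking a BK-type bound. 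The difficulty — and the reason the statement remains conjectural — is that reachability is not monotone in the edge orientations, so the unconditional source--sink negative correlation does not follow formally from the conditional version; any residual positive discrepancy would instead have to be absorbed by the slack already visible in the pointwise bound on $h-fg$.
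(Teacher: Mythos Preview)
The paper offers no proof of this statement: it is listed as an open conjecture in Section~\ref{S:PC}, motivated only by the results on cycles, trees, and the four-vertex examples of Section~\ref{S:CE}. So there is no paper argument to compare your attempt against, and you are right to present it as a partial attack rather than a proof.

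On its own merits: the degree-$0$ and degree-$1$ cases are fine, the conditional formulas for $f,g,h$ in the degree-$2$ case are correct (also when $a$ or $b$ coincides with $u$ or $w$), and the pointwise identity
\[
16(h-fg)=-2X_uY_u(2-X_w-Y_w)-2X_wY_w(2-X_u-Y_u)-X_uX_wY_uY_w\le0
\]
is a clean and genuinely useful observation. The gap is exactly where you place it, but it is sharper than you suggest. The ``single principle'' that any source-reachability event and any sink-reachability event are non-positively correlated in a random orientation is \emph{false}: the events $\{a\to u\}$ and $\{u\to b\}$ in $G-s=K_m$, $m\ge5$, are positively correlated by Theorem~\ref{T:korr}, so no such blanket inequality can be invoked. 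More damagingly for your specific split $E[h]-E[f]E[g]=E[h-fg]+\mathrm{Cov}(f,g)$, the second piece is not always $\le0$. Take $\{u,w\}=\{a,b\}$: then $X_u=Y_w=1$ and $X_w=Y_u=:Z=\mathbf 1\{a\to b\text{ in }G-s\}$, so $f=g=\tfrac14(2+Z)$ and $\mathrm{Cov}(f,g)=\mathrm{Var}(f)=\tfrac1{16}\,p(1-p)>0$ whenever $p:=P(a\to b\text{ in }G-s)\in(0,1)$. The conjecture does hold here, since $E[h]-E[f]^2=\tfrac14(1+p)-\tfrac1{16}(2+p)^2=-p^2/16\le0$, but only because the slack $E[h-fg]=-p/16$ absorbs the positive covariance. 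Hence the missing step cannot be a qualitative negative-correlation principle of the \cite{SL2,vdBK,vdBHK,CM} type; it has to be a quantitative comparison showing $\mathrm{Cov}(f,g)\le -E[h-fg]$ for every $G-s$, and that is precisely what keeps the statement a conjecture.
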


Any connected simple graph $G=(V,E), |V|\ge 3$ belongs to (at least) one of the following classes.

\begin{itemize}
\item[I] For any three distinct vertices $a,b,s\in V(G)$, the events $\{a\to s\}$ and $\{s\to b\}$ are non-positively correlated.
\item[II] There exist three distinct vertices $a,b,s\in V(G)$, such that the events $\{a\to s\}$ and $\{s\to b\}$ are negatively 
correlated and there exist three distinct vertices $a',b',s'\in V(G)$, such that the events $\{a'\to s'\}$ and $\{s'\to b'\}$ are 
positively correlated. 
Or there exist three distinct vertices $a,b,s\in V(G)$, such that the events $\{a\to s\}$ and $\{s\to b\}$ are independent.
\item[III] For any three distinct vertices $a,b,s\in V(G)$, the events $\{a\to s\}$ and $\{s\to b\}$ are non-negatively correlated.
\end{itemize}

We have shown that trees and cycles belong to Class I, $K_n,n\ge 5$ belongs to Class III and $K_4$ minus one edge belongs to Class II. 
Note that when we have independent events there may be some overlap between the classes, in particular $K_4$ belongs to all three classes.

\begin{conjecture}
For large $n$ most graphs will belong to Class II.
\end{conjecture}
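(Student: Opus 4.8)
The plan is to read ``most graphs'' in the strongest reasonable sense: that the proportion of labelled graphs on $[n]$ lying in Class II tends to $1$ as $n\to\infty$, equivalently that the uniform random graph $G(n,1/2)$ is in Class II with probability tending to $1$. To certify membership in Class II it is enough to exhibit, inside the graph, either a single triple for which $\{a\to s\}$ and $\{s\to b\}$ are independent, or two triples whose correlations have opposite signs. I would pursue the latter, so the whole problem reduces to showing that a typical graph simultaneously contains a \emph{positively} correlated triple and a \emph{negatively} correlated triple.

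For the positive triple I would build on the complete-graph analysis. Theorems \ref{T:korr} and \ref{T:relkorr} show that in $K_n$ every triple is positively correlated for $n\ge5$, with relative covariance tending to $1/3$. The plan is to transport the estimates of Theorems \ref{L:A} and \ref{L:AB} from $K_n$ to $G(n,1/2)$: the dominant contributions to $P(a\nto s)$ and to $P(a\nto s,\,s\nto b)$ come from the local obstruction that $s$ (or $a$, or $b$) fails to be reached in one step, and these should survive the random deletion of edges with the same leading orders $(1/2)^{n-2}$ and $3\cdot(1/2)^{2n-3}$, up to $1+o(1)$ factors. Showing that the resulting relative covariance stays bounded away from $0$ on the positive side for a generic triple would then furnish the required positively correlated triple with high probability.

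For the negative triple, and this is where I expect the main obstacle, the natural sources of negative correlation are exactly the structures of Section \ref{S:CT}: a vertex $s$ of small degree, a cut-like local configuration, or the negatively labelled $K_4$ minus an edge of Figure \ref{F:neg}. The difficulty is twofold. First, in the dense regime $p=1/2$ there are no low-degree vertices, so one cannot simply invoke the degree-$\le2$ mechanism; one must instead locate a combinatorial configuration forcing negative correlation despite the surrounding density. Second, and more fundamentally, the correlation is a \emph{global} function of the whole random orientation: directed paths may be arbitrarily long, so the sign of the covariance for a given triple cannot be read off any bounded neighbourhood. Controlling that sign therefore requires genuinely global probabilistic estimates---either a coupling or BK-type argument in the spirit of \cite{vdBK, vdBHK, SL2}, or precise asymptotics of $P(a\nto s)$ and $P(a\nto s,\,s\nto b)$ for the specific triple in the random graph.

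A complementary route, which may be easier to push through, is a counting argument: show that Class I forces the graph to be tree-like or sparse (as Theorem \ref{T:cycles} and the tree/forest result suggest) and that Class III forces a strong homogeneity close to completeness (as the $K_n$ results suggest), so that both classes are rare among all $2^{\binom{n}{2}}$ graphs and their complement---Class II---is almost everything. The obstacle here is that neither Class I nor Class III has been characterised, so one would first need quantitative structural descriptions of them. In either approach the crux is the same tension: the positive mechanism thrives on density while the negative mechanism thrives on sparsity, so the proof must exhibit a \emph{single} graph in which both coexist; the random-graph $G(n,p)$ framework of the forthcoming \cite{AL2}, applied across a range of local densities, seems the most promising setting in which to reconcile them.
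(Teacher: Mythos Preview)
The statement you are attempting to prove is a \emph{conjecture} in the paper; it appears in Section~\ref{S:PC} (Open problems and conjectures) and the paper offers no proof whatsoever. There is therefore nothing to compare your proposal against.

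More to the point, what you have written is not a proof but a research outline, and you yourself flag the essential gap: to place a graph in Class~II you need, with high probability, a negatively correlated triple inside $G(n,1/2)$, and you have no mechanism that produces one. Your candidate sources of negative correlation (low-degree vertices, cut-like configurations, the labelled $K_4$ minus an edge) either do not occur in the dense regime or, as you correctly observe, cannot be read off locally because the covariance depends on the entire orientation. The ``complementary route'' via structural characterisations of Classes~I and~III is equally open: the paper explicitly lists the characterisation of Class~I as an unsolved problem and poses analogous questions for Class~III. Until one of these obstacles is overcome, the conjecture remains a conjecture; your proposal identifies reasonable lines of attack but does not close any of them.
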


In fact we guess that for $n$ large enough, the graphs in Class I are joins (in some vague sense) of cycles and trees. 
It would be interesting if it was possible to characterize the graphs in Class I. We formulate the following more specific questions. 
Recall that outerplanar graphs are the graphs that do not have $K_4$ or $K_{2,3}$ as minors.

\begin{problem}
Are all graphs in Class I (with $|V(G)|\ge 5$) outerplanar?
\end{problem}

Similarly one could ask for a characterization of the graphs in Class III. The following subproblem would also be interesting if it could be solved.

\begin{problem}
For a given $n$, what is the smallest number $k$ such that there exist $k$ edges whose removal from $K_n$ gives a graph not in Class III?
\end{problem}

Finally we ask if Class I and Class III are monotone.

\begin{problem}
Is it true that if $G$ belongs to Class I, but not to Class II, then so does any connected subgraph obtained by removing one edge? Similarly, is it true that if $G$ belongs to Class III, but not to Class II, then so does $G$ plus any new edge?
\end{problem}

The results in \cite{AL2} seem to suggest that Class III is larger than Class I. Is this true?

\end{document}